\numberwithin{equation}{section}
\newtheorem{bigthm}{Theorem}
\newtheorem{thm}{Theorem}[section]
\newtheorem{lem}[thm]{Lemma}
\newtheorem{prop}[thm]{Proposition}
\theoremstyle{definition}
\newtheorem{defn}[thm]{Definition}
\theoremstyle{remark}
\newtheorem*{example}{Example}
\newtheorem*{rmk}{Remark}
\newcommand{\sC}{\mathcal{C}}
\newcommand{\sO}{\mathcal{O}}
\newcommand{\sQ}{\mathcal{Q}}
\newcommand{\sS}{\mathcal{S}}
\newcommand{\mC}{\mathbb{C}}
\newcommand{\mP}{\mathbb{P}}
\newcommand{\mQ}{\mathbb{Q}}
\newcommand{\To}{\longrightarrow}
\title{Quartic surfaces, their bitangents and rational points}
\author{Pietro~Corvaja}
\address{Dipartimento di Scienze Matematiche, Informatiche e Fisiche, Universit\`a di Udine, Udine, 33100, Italy}
\email{pietro.corvaja@uniud.it}
\author{Francesco Zucconi}
\address{Dipartimento di Scienze Matematiche, Informatiche e Fisiche, Universit\`a di Udine, Udine, 33100, Italy}
\email{francesco.zucconi@uniud.it}
\begin{document}



\maketitle

\begin{prelims}

\DisplayAbstractInEnglish

\bigskip

\DisplayKeyWords

\medskip

\DisplayMSCclass







\end{prelims}


\newpage

\setcounter{tocdepth}{1}

\tableofcontents


\section{Introduction}

\subsection{The result}

In the recent paper \cite{CoZu} we investigated the geometry of the surface $S$ which parametrizes the bitangents to a generic smooth quartic surface in $\mP^3$.
Although our results in \cite{CoZu} were of a geometric nature, and had been formulated over the complex number field, one of the motivation of our research came from arithmetic. In the present paper we develop, among other things, some arithmetic consequences of our previous researches.

Given a smooth quartic surface $X\subset \mP^3$ defined over a number field $\kappa$, it is widely believed that its set of rational points is potentially dense, by this we mean that there should exist a number field $\kappa'\supset\kappa$ such that the set $X(\kappa')$ is Zariski dense. For instance, $K3$ surfaces are ``special varieties'' in the sense of Campana (see \cite{Campana}).
While this is proved only in special cases, all of which requiring the N\'eron Severi group of  $X$  having rank bigger than one (see \emph{e.g.} the papers of Bogomolov, Harris, Hasset and Tschinkel \cite{Bom-Tsch,Harris-Tsch,Has-Tsch} on elliptic $K3$ surfaces, and the older paper \cite{SD} by Swinnerton-Dyer on the Fermat quartic surface) we can prove the following weaker (and elementary) result for general quartic surfaces.

\begin{bigthm}\label{Th:A}
Let $X$ be a smooth quartic surface over a number field $\kappa$. There exists a finite extension $\kappa'/\kappa$ such that the set of algebraic points in $X(\bar{\kappa})$ which are quadratic over $\kappa'$ is Zariski-dense.
\end{bigthm}

Theorem~\ref{Th:A} is proved by showing that every quartic surface can be covered by hyperelliptic curves of genus two, all defined over a fixed number field together with their canonical involution, such that the corresponding genus zero quotient has a rational point.  Alternatively, one could use two genus one curves on $X$: over a sufficiently large number field they would provide infinite families of secants defined over the same number field, each providing two quadratic points on $X$. 

We note that the irrationality degree of a general quartic surface is three (\emph{cf.} \cite{Yosh}). This implies that there are no degree two rational maps $X \dashrightarrow \mP^2$, so that the quadratic rational points on $X$ cannot be constructed in the obvious way as pre-images of rational points on the plane. Also, since the irregularity of $X$ is zero, there cannot exist degree two rational maps $X\to A$ where $A$ is an Abelian surface (actually no dominant maps at all), so that again the quadratic rational points cannot be constructed from the rational points on an Abelian surface. 

\smallskip

Quadratic rational points can be naturally obtained from rational bitangents: every bitangent defined over $\kappa$ intersects the surface $X$ at two quadratic (exceptionally rational) points. Hence the presence of infinitely many rational bitangents would imply the infinitude of  quadratic rational points. However, for generic quartics we prove the finiteness of such rational bitangents.

\begin{bigthm}\label{Th:B}
Let $X\subset\mP^3$ be a smooth quartic surface, defined over a number field $\kappa$, containing no line. Then  there are only finitely many bitangent lines to $X$ which are defined over $\kappa$.
\end{bigthm}

Of course, this statement formally implies that  finiteness holds also for bitangents defined over a fixed finite extension of $\kappa$. Before going on, we remark in the projective space $\mP^{34}$ parametrizing quartic surfaces in $\mP^3$,  the $\mQ$-rational points  corresponding to smooth quartics with geometric Picard number one are Zariski-dense; this is the content of a  result of R. van   Luijk \cite{Luijk}, who also proves that one can construct such quartic surfaces so to contain infinitely many rational points. Since having geometric Picard number one prevents the existence of lines (over the algebraic closure of the field of definition), our theorem indeed admits concrete applications.

As mentioned at the beginning, the set of bitangents to $X$ is naturally endowed with the structure of an algebraic surface $S$; this surface is smooth if and only if $X$ contains no line.  Theorem~\ref{Th:B} consists in proving the finiteness of the rational points on such a surface $S$, over every number field, hence providing an instance of the validity in strong form of the so called Bombieri--Lang conjecture for surfaces of general type. The same proof also provides the hyperbolicity of such surfaces, both in the sense of Brody (absence of entire curves on $S$) and in the sense of Kobayashi (K. pseudo-distance being a real distance).

\smallskip

Besides the conjecture of Bombieri--Lang (for rational points) and Green--Griffiths (for entire curves), surfaces of general type are believed to contain only finitely many curves of geometric genus $\leq 1$. This is the content of the famous Bogomolov's conjecture. For such surfaces $S$,  we prove this conjecture in strong form.

\begin{bigthm}\label{Th:C}
Let $X$ be a smooth quartic, defined over the complex number field $\mC$, containing no line. Let $S$ be the surface parametrizing the bitangents to $X$. Then $S$ contains no curve of geometric genus $\leq 1$
\end{bigthm}

The proof of Theorem~\ref{Th:B} goes as follows. First by Chevalley--Weil Theorem (see \emph{e.g.} \cite{CZ-Hilb} for a discussion and alternative formulations) proving the finiteness of rational points on $S$ is tantamount to show it for the \'etale double cover $S_X\to S$ 
realised by the Hilbert scheme of lines $S_X$ of the quartic double solid $Q\to\mP^3$ branched on $X$. 
The geometry of $S$ and $S_X$ has been the object of thorough investigations by M. Welters \cite{W} and Tikhomirov \cite{T}, which will be used in the present paper, together with some results from our recent work \cite{CoZu}. In particular, by a theorem of Welters it follows that the irregularity of $S_X$ equals ten, in particular it exceeds the dimension of $S_X$. This is sufficient to deduce, \emph{via} a theorem of Faltings (see \cite[Theorem~1]{Fal}), the degeneracy of rational points on $S_X$. It remains to exclude possible infinite families contained in curves of genus $\leq 1$, and this is done \emph{via} our Theorem~\ref{Th:C}.

\smallskip

We end by noticing that the hypothesis on the Picard number of $X$ being equal to one cannot be omitted from Theorem~\ref{Th:B}. Indeed we found the following example of a smooth quartic admitting infinitely many bitangents  defined over $\mathbb{Q}$.

\begin{example}
Let $X\subset\mP^3$ be the smooth quartic surface of equation
\begin{equation*} 
x^4-xy^3=z^4-zw^3.
\end{equation*}
Then for every point $(s_0:s_1)\in\mP^1$, the line of equation
\begin{equation*}
\left\{
\begin{matrix}
s_0^3 x &=& s_1^3 z \\
s_0w &=& s_1 z
\end{matrix}
\right.
\end{equation*}
is quadritangent to $X$, in particular it is a bitangent.
\end{example}

Note that the surface $X$ defined above, known to some authors as  Schur's quartic surface,  contains $64$ lines (the maximal number of lines that can be contained in a smooth quartic) and its Picard number is $20$ (the maximal possible value for a quartic surface).
Geometrically, the above family of lines constitutes a rational curve on the corresponding surface $S$ parametrizing the bitangents to $X$; in this case, the surface $S$ is singular.

\section{The quartic surface}
 In this section we describe the geometry necessary to show Theorem~\ref{Th:C} and hence Theorem~\ref{Th:B}. 
 Let $V$ be a complex vector space of dimension $4$ and let $V^{\vee}$ be its $\mathbb C$-dual. We set $\mP^3:=\mP(V^\vee)$. Let $F\in {\rm{Sym}}^4V$ and let 
$X:=(F=0)\subset \mP^3$ be the associated quartic surface. 

We suppose from now on that $X$ is smooth, so that it is a K3 surface.
 
 \subsection{Morphism from elliptic curves to quartic surfaces} We first recall a proof of Theorem \ref{curel} stated below.
 We consider $X\subset\mP^3$ and the full linear system of hyperplane sections $\mP(H^0(X,\sO_{X}(1)))$. To impose two nodes counts for two independent algebraic conditions. 
 Hence the locus $\sC_{X}\hookrightarrow \mP(H^0(X,\sO_{X}(1)))$ given by those hyperplane sections with at least two nodes has dimension $\geq 1$.

\begin{thm}\label{curel} In any smooth quartic there exists a $1$-dimensional family of curves whose general member is irreducible of geometric genus $1$.
\end{thm}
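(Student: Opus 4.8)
The plan is to produce the genus-one curves as the suitably nodal hyperplane sections of $X$. I begin by recording the numerics. Since $X$ is a smooth quartic it is not contained in a hyperplane, and the restriction sequence gives $H^0(X,\sO_X(1))\cong H^0(\mP^3,\sO_{\mP^3}(1))$, so the complete linear system of hyperplane sections is the projective space $\mP(H^0(X,\sO_X(1)))\cong\mP^3$ appearing in the set-up. As $X$ is a K3 surface, adjunction gives $2p_a-2=H^2=4$, so every member of this system has arithmetic genus $3$, the general one being a smooth plane quartic of genus $3$. If a section has exactly $\delta$ nodes and no other singularity, its geometric genus is $3-\delta$; thus a section with exactly two nodes has geometric genus $1$, and it is such sections that I would use.

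A useful preliminary remark is that a section with exactly two nodes is automatically irreducible. Indeed, if a plane quartic section were reducible, the total intersection number of its components would be at least $3$ by B\'ezout (the minimum, $1\cdot 3=3$, being attained by a line and a cubic); hence such a section would have either at least three nodes or a singularity worse than a node. Consequently, once I exhibit a positive-dimensional family of sections having exactly two nodes, its members are automatically irreducible of geometric genus $1$.

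For the dimension count I would make precise that two nodes impose two conditions. The section $X\cap H$ is singular at a point $p\in X$ precisely when $H$ equals the embedded tangent plane $T_pX$, which is uniquely determined by $p$; therefore the singular sections form the dual surface $X^\vee$, a hypersurface in the dual $\mP^3$, and a second node at a distinct point is the condition $T_{p_1}X=T_{p_2}X$ with $p_1\neq p_2$. On $X\times X$ minus the diagonal this is the vanishing of the difference of the two tangent-plane (Gauss) images, i.e.\ three equations on a four-dimensional variety, so the locus $\sC_X$ of sections with at least two nodes has dimension $\geq 1$, as asserted.

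The step I expect to be the main obstacle is nonemptiness: the estimate bounds $\sC_X$ only after one knows it is nonempty, which amounts to exhibiting a plane bitangent to $X$. I would deduce this from the classical geometry of the dual surface: the Gauss map $X\to X^\vee$ is finite onto a surface of degree $d(d-1)^2=36$, and the dual of any smooth surface of degree $\geq 3$ in $\mP^3$ is singular along a nonempty node-curve, whose general point is a plane tangent to $X$ at exactly two points (planes tangent at three or more points form a finite set). Such a plane cuts out a section with exactly two ordinary nodes; letting the plane vary along the node-curve yields a one-dimensional family of two-nodal sections inside $\sC_X$. By the preliminary remark these sections are themselves irreducible curves of geometric genus $1$, which gives the desired one-dimensional family and completes the proof.
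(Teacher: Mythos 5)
Your overall strategy is the same as the paper's---produce the genus-one curves as two-nodal hyperplane sections, using the dimension count ``two nodes impose two conditions'' on the three-dimensional system $|{\sO_X(1)}|$---but you diverge on the two delicate points, with mixed results. On irreducibility your B\'ezout argument (a reduced reducible plane quartic has components meeting in at least $3$ points counted with multiplicity, hence cannot have exactly two nodes and nothing else) is cleaner and more complete than the paper's treatment, which splits into cases according to whether the general member of $\sC_X$ is irreducible and, in the reducible case, only discusses the sub-case of a fixed line plus a moving cubic. What the paper uses instead, and what is conspicuously absent from your proof, is the non-uniruledness of a K3 surface: a positive-dimensional family of rational curves would cover $X$, which is impossible. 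That single observation is what the paper uses to rule out \emph{every} degenerate possibility at once (three or more nodes, worse-than-nodal singularities, rational components), and it is robust for an arbitrary smooth quartic, not just a general one.

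The genuine gap is in your nonemptiness step, which you rightly identify as the crux but then settle by assertion. The claim that the dual of any smooth surface of degree $\geq 3$ in $\mP^3$ is singular along a nonempty \emph{node} curve whose general point is a plane with exactly two \emph{ordinary} tangencies is classical for a general surface, but you give no proof or reference valid for every smooth quartic. What one can prove cheaply is that $\Sing X^\vee$ contains a curve (the Gauss map is finite and birational onto the hypersurface $X^\vee$, which is therefore non-normal, hence singular in codimension one); but a priori that curve could be entirely the cuspidal edge (image of the parabolic curve, where the Gauss map is injective but not an immersion), in which case it produces no bitangent planes at all, or it could consist of planes with tacnodal or reducible sections (e.g.\ two conics meeting at two tacnodes), whose components are rational. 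The second pathology is exactly what the paper's K3 argument excludes, so you should import it; the first---that the bitangent-plane locus is actually nonempty---still needs an argument (a virtual-class or double-point computation, or a citation to the existence of nodal curves of every geometric genus $0\le g\le 3$ in the primitive class of a quartic K3). To be fair, the paper's own phrase ``to impose two nodes counts for two independent algebraic conditions'' only bounds the codimension of $\sC_X$ where it is nonempty and so does not fully settle this point either; but since you explicitly built your proof around it, the burden is on you to close it.
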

\begin{proof} We consider two cases. If the general element of $\sC_X$ is irreducible then the general element has exactly two nodes (otherwise we would have an algebraic family of rational curves on a surface with vanishing irregularity; this, however, would imply that $X$ is rational). Hence we find a family of curves of geometric genus one.

Assume now that its general element is reducible. If this consists  into the union of two conics, then again we would have a one dimensional family of rational curves on $X$, which is impossible.

Then, it only remains to  consider the case where inside $\sC_X$ we have a linear system of hyperplane sections which split in the union of a fixed line and  a plane cubic. If the generic cubic in that family  were singular, $X$ would be again covered by an algebraic family of rational curves, which as we said is impossible. Then the generic cubic is smooth, \emph{i.e.} of genus one, and we have finished.
\end{proof}

\subsection{The surface of bitangents}
While Theorem~\ref{Th:A} holds for every smooth quartics, Theorem~\ref{Th:B} will be proved only for quartics which do  not contain any line (and our example in the introduction shows that this assumption cannot be completely removed).  Indeed we mostly rely on the work of Welters \cite{W} and the statements we use require that assumption to hold. So,  from now on {\it we assume that $X$  contains no line}. 

Let $\mathbb G:=\mathbb G(2, V^{\vee})$ be the Grassmann variety which parametrizes the lines of $\mP^3$. 
\begin{defn}\label{definizionedibitangente} A line $l\subset\mP^3$ is a bitangent line to $X$ if the subscheme $X_{|l}\hookrightarrow l$ is non reduced over each supporting point.
\end{defn}
Let $f_{l}$ be the binary form obtained by the restriction of $X=(F=0)$ to a line $l\subset\mP^3$.
\begin{defn} We call
\begin{equation}\label{superficiebitangenti}
S:=\left\{ [l]\in\mathbb G\mid X_{|l} =V(f_{l}) \ \text{where}\ f_{l}\ \text{is a binary biquadratic form}  \right\},
\end{equation}
{\it{the variety of bitangents to $X$}}.
\end{defn}

\subsection{The surface of contact points}
We have the standard exact sequence of vector bundles on $\mathbb G$:
\begin{equation}\label{universal-dual}
0\To \sQ^{\vee}\To V^{\vee}\otimes\sO_\mathbb G\To \sS\To 0.
\end{equation}
A point $\alpha\in \sQ^{\vee}$ is a couple $([l],v)$ where $[l]\in\mathbb G$, $v$ belongs to the $2$-dimensional vector space whose projectivisation is the projective line $l$. 

 We denote by $\mP(\sQ)$ the variety ${\rm{Proj}}({\rm{Sym}}(\sQ))$. Note for a vector space $V$, we denote by $\mP(V)$  the set of lines of $V$. By definition $\mP(\sQ)$ coincides with the universal family of lines over $\mathbb G$:
$$
\mP(\sQ)=\left\{ ([l],p) \in \mathbb G \times \mP^3 \mid p\in l \right\}.
$$
We denote by $\pi_{\mathbb G}\colon \mP(\sQ)\to\mathbb G$ the natural projection and following the mainstream we call {\it{the universal exact sequence}} the following one (dual to \eqref{universal-dual}):
\begin{equation}\label{universal}
0\To \sS^{\vee}\To V\otimes\sO_\mathbb G\To \sQ\To 0.
\end{equation}
If $H_{\mathbb G}$ is the hyperplane section of the Pl\"ucker embedding $\mathbb G\hookrightarrow \mP(\bigwedge^2V^{\vee})$, we stress that $\sO_\mathbb G(H_{\mathbb G})=\det\sQ=\det\sS$. Using the inclusion $j_{S}\colon S\hookrightarrow \mathbb G$ we can define $\sQ_{S}:=j_{S}^{\star}\sQ$. It remains to define the variety of contact points.
\begin{defn}\label{superficiebitangentipuntate}
We call
\begin{equation}
Y:=\left\{ ([l],p)\in S\times X\mid p \in X_{|l}  \right\}
\end{equation}
{\it{the variety of contact points}}.
\end{defn}
Obviously there is an embedding $j_Y\colon Y\hookrightarrow \mP(\sQ_{S})$ and the natural morphism $\pi_{S}\colon \mP(\sQ_{S})\to S$ restricts to a morphism $\pi\colon Y\to S$ which we call {\it{the forgetful morphism}}.

\subsubsection{Basic diagrams} Now we consider the standard conormal sequence of $X$ inside $\mP^3$:
\begin{equation}\label{conormal}
0\To \sO_X(-4)\To \Omega^1_{\mP^3|X}\To \Omega^1_X\To 0.
\end{equation}
\noindent
Following \cite{T} we can build the following diagram:
\begin{equation}\label{diagrammasullasuperficie}
\begin{tikzcd}
J_X\colon \mP\left(\Omega^1_X(1)\right)\ar[r]\ar[dr,"\rho_X"']&\mP\left(\Omega^1_{\mP^3|X}(1)\right)\ar[d,"\rho'"]& \mP\left(\sQ_S\right) \ar[dr,"\pi_S"]\ar[l]&\\
&X\subset\mP\left(V^\vee\right)&&S&
\end{tikzcd}
\end{equation}
where the inclusion $J_X\colon \mP\left(\Omega^1_X(1)\right)\to \mP\left(\Omega^1_{\mP^3|X}(1)\right)$ is given by the sequence \eqref{conormal} and the morphism 
$\mP(\sQ_S)\to \mP\left(\Omega^1_{\mP^3|X}(1)\right)$ is the restriction over $S$ of the standard diagram:
\begin{equation}\label{diagrammasullospazio}
\begin{tikzcd}
&\mP(\Omega^1_{\mP^3}(1))\cong\ar[dl,"\rho"']\mP(\sQ) \ar[dr,"\pi_{\mathbb G}"]&\\
\mP\left(V^\vee\right)&&  \mathbb G
\end{tikzcd}
\end{equation}
and $\rho'\colon \mP\left(\Omega^1_{\mP^3|X}(1)\right)\to \mP\left(V^\vee\right)$ is the obvious restriction.

\subsubsection{Geometrical interpretation}
The $\mP^1$-bundle $\pi_{\mathbb G}\colon \mP(\sQ)\to\mathbb G$ is the universal family of $\mathbb G$ by the sequence \eqref{universal} and the $\mP^2$-bundle $\rho\colon \mP\left(\Omega^1_{\mP^3}(1)\right)\to\mP^3$ is the projective bundle of the tangent directions on $\mP^3$, that is: $\rho^{-1}(p)=\mP\left(T_{\mP^3,p}\right)$ where $T_{\mP^3,p}$ is the vector space given by the tangent space to $\mP^3$ at the point $p$. The isomorphism $\mP\left(\Omega^1_{\mP^3}(1)\right)\cong \mP(\sQ)$ is well-known. 

It is also easy to verify that for the $3$-fold $\mP\left(\Omega^1_X(1)\right)$ we have:
$$\mP\left(\Omega^1_X(1)\right)\cong \left\{(p,[l])\in X\times \mathbb G\mid l\in \mP( T_{p} X) \right\}.$$
Inside $S$ there is the subscheme $B_{\rm{hf}}\hookrightarrow S$ which parametrizes the hyperflex lines. Using the surface of contact points $Y$ (\emph{cf.} Definition~\ref{superficiebitangentipuntate}) and $B_{\rm{hf}}$, we get an important fact about $S$.
\begin{prop}\label{ilrivestimento doppio ramificato}
There exists a non trivial $2$-torsion element $\sigma\in {\rm{Div}}(S)$ such that the surface of contact points $Y$ can be realised as a subscheme of $\mP\left(\sO_S\oplus\sO_S(\sigma+H_{\mathbb G|S})\right)$. The restriction of the natural projection $\mP\left(\sO_S\oplus\sO_S(\sigma+H_{|S})\right)\to S$ induces a $2$-to-$1$ cover $\pi\colon Y\to S$ branched over $B_{\rm{hf}}\in \left|2H_{\mathbb G|S}\right|$. In particular $Y$ is a smooth surface. 
\end{prop}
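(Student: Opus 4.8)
The plan is to realise $Y$ as the zero scheme of a global family of binary quadratic forms on the fibres of the $\mathbb{P}^1$-bundle $\mP(\sQ_S)\to S$, and then to extract the double-cover data by a relative-duality computation on this bundle. The quotient $V\otimes\sO_{\mathbb{G}}\to\sQ$ of the universal sequence (\ref{universal}) induces, on fourth symmetric powers, a section $\bar F\in H^0(\mathbb{G},{\rm{Sym}}^4\sQ)$ whose value at $[l]$ is the binary quartic $f_l$ of Definition \ref{definizionedibitangente}. By the defining property (\ref{superficiebitangenti}) of $S$, the restriction $f_l:=\bar F|_S$ is fibrewise a perfect square $f_l=g_l^{2}$, with $g_l$ a binary quadratic form well defined up to its global sign. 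Choosing local square roots and gluing them, their transition functions being signs $\pm1$, produces a line bundle $M$ on $S$ with $M^{\otimes2}\cong\sO_S$ together with a global section $g_l\in H^0(S,{\rm{Sym}}^2\sQ_S\otimes M)$ satisfying $g_l\otimes g_l=f_l$. I set $\sigma:=[M]\in\Div(S)$; it is $2$-torsion by construction.

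To see that $\sigma$ is non-trivial I would identify the associated double cover geometrically. The two sign choices $\pm g_l$ are exactly the two components into which the pull-back of the bitangent $l$ to the quartic double solid $Q\colon w^{2}=F$ splits: over $l$ one has $w^{2}=f_l=g_l^{2}=(w-g_l)(w+g_l)$, so the fibre breaks into the two lines $w=\pm g_l$, which are the two points of the Hilbert scheme $S_X$ above $[l]$. Hence the cover attached to $\sigma$ is the \'etale double cover $S_X\to S$ of the Introduction. It is unramified because $g_l$ never vanishes identically, and it is connected by \cite{W}; a connected \'etale double cover has non-trivial class, so $\sigma\neq0$.

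Since the two contact points of $l$ are the roots of $g_l$, the variety of contact points is $Y=Z(g_l)\subset\mP(\sQ_S)$, the zero scheme of $g_l$ viewed as a section of $\sO_{\mP(\sQ_S)}(2)\otimes\pi_S^{*}M$; being fibrewise a non-zero quadratic form, $g_l$ makes $\pi\colon Y\to S$ finite of degree two. I would then push the sequence $0\to\sO_{\mP(\sQ_S)}(-Y)\to\sO_{\mP(\sQ_S)}\to\sO_Y\to0$ forward along $\pi_S$: using $\omega_{\mP(\sQ_S)/S}\cong\sO(-2)\otimes\pi_S^{*}\det\sQ_S$, $\det\sQ_S=\sO_S(H_{\mathbb{G}|S})$ and relative Serre duality, one finds $\pi_*\sO_Y\cong\sO_S\oplus\sO_S(-\sigma-H_{\mathbb{G}|S})$. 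This is precisely the datum of a double cover with invertible sheaf $L:=\sO_S(\sigma+H_{\mathbb{G}|S})$, giving the embedding $Y\hookrightarrow\mP(\sO_S\oplus\sO_S(\sigma+H_{\mathbb{G}|S}))$. Its branch locus is the zero scheme of the fibrewise discriminant of $g_l$, a section of $L^{\otimes2}=\sO_S(2\sigma+2H_{\mathbb{G}|S})=\sO_S(2H_{\mathbb{G}|S})$ since $2\sigma=0$; this vanishing is exactly where the two roots of $g_l$ coincide, i.e. the hyperflex locus, so $B_{\rm{hf}}\in|2H_{\mathbb{G}|S}|$.

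For the final assertion, a double cover of the smooth surface $S$ (smooth for line-free $X$ by \cite{W}) is itself smooth exactly when its branch divisor is smooth and reduced; equivalently, $Y=Z(g_l)$ is smooth precisely when $g_l$ meets the zero section transversally, which can only fail over a hyperflex. I would therefore finish by showing that $B_{\rm{hf}}$ is a smooth member of $|2H_{\mathbb{G}|S}|$ for line-free $X$. I expect the two genuinely non-formal points to be (i) the identification and non-vanishing of $\sigma$, where the splitting of the lines on the double solid into $w=\pm g_l$ and the connectedness of $S_X$ do the work, and (ii) the smoothness of $B_{\rm{hf}}$: this is the real geometric input, and it is the step where the hypothesis that $X$ contains no line, together with the results of \cite{W}, is indispensable.
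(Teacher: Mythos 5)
The paper does not actually prove this proposition: the stated ``proof'' is a verbatim citation of \cite[Proposition 3.11]{W}, so there is no internal argument to compare yours against. That said, your reconstruction is the standard one and, as far as it goes, is correct: $F$ induces a section of ${\rm Sym}^4\sQ_S$ which is fibrewise a perfect square on $S$ (here you implicitly use $l\not\subset X$ so that $f_l\not\equiv 0$), the sign ambiguity of the square root $g_l$ glues to a $2$-torsion class $\sigma$, the associated connected \'etale double cover is precisely $S_X\to S$ via the two branches $w=\pm g_l$ over a bitangent, whence $\sigma\neq 0$; and the pushforward of $\sO_Y$ for $Y=Z(g)\subset\mP(\sQ_S)$, computed via $\omega_{\mP(\sQ_S)/S}\cong\sO(-2)\otimes\pi_S^{*}\det\sQ_S$ and relative duality, gives $\sO_S\oplus\sO_S(-\sigma-H_{\mathbb G|S})$, hence the embedding into $\mP(\sO_S\oplus\sO_S(\sigma+H_{\mathbb G|S}))$ with branch class $2\sigma+2H_{\mathbb G|S}=2H_{\mathbb G|S}$. (The only caveat is the usual $\sQ$-versus-$\sQ^{\vee}$ bookkeeping, about which the paper itself is not consistent; this does not affect the substance.)

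The one genuine gap is the final assertion. You correctly reduce the smoothness of $Y$ to the smoothness of $S$ together with the smoothness and reducedness of $B_{\rm hf}$, but you leave the latter as a promissory note. This is not a formality: $B_{\rm hf}$ is the discriminant divisor of the family of binary quadrics $g_l$, and discriminants of families of quadrics are singular in general, so one must actually verify transversality of $g$ with the zero section of $\sO_{\mP(\sQ_S)}(2)\otimes\pi_S^{*}M$ at every hyperflex line, using the smoothness of $X$ and the absence of lines on $X$. Since this is the only point of the proposition where nontrivial geometric input enters, an argument that omits it establishes strictly less than what is claimed. Given that the paper outsources the entire proposition to \cite{W}, your write-up is a reasonable expansion of the first two assertions, but to be complete you should either carry out that local computation at a hyperflex or cite Welters specifically for the smoothness of $Y$ (equivalently, of $B_{\rm hf}$) rather than deferring it.
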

\begin{proof} See \cite[Proposition 3.11]{W}.
\end{proof}

By the diagram \eqref{diagrammasullasuperficie} and by a slight abuse of notation we obtain the basic diagram:
\begin{equation}
\label{oneuniversal&projection}
\begin{tikzcd}
& Y \ar[dl,"\rho"'] \ar[dr,"\pi"]&\\
X\subset\mP^{3} &  & S\subset\mathbb G 
\end{tikzcd}
\end{equation}
As in the papers \cite{W} and \cite{CoZu}, the geometry of the surface $Y$ will be crucial to understand that of $S$.

\section{The quartic double solid}
\subsection{Some general results about the notion of line of a quartic double solid} 

Let $\pi_Q\colon Q\to\mP^3$ be the $2$-to-$1$ cover branched over $X$. In \cite{T} and in \cite{W} it is shown how to relate the geometry of $Q$ with the one of $S$. We consider the tautological divisor $T_{\mP/\mP^3}$ of $\mP:=\mP\left(\sO_{\mP^3}\oplus \sO_{\mP^3}(2)\right)$, that is, following Grothendieck, $\rho_{\star}\sO_{\mP}(T_{\mP/\mP^3})=\sO_{\mP^3}\oplus \sO_{\mP^3}(2)$, where $\rho\colon \mP\to\mP^3$ is the natural projection. We denote by $H_{\mP^3}$ the hyperplane section of $\mP^3$ and we set 
$\sO_{\mP}(n):=\rho^{\star}\sO_{\mP^3}(nH_{\mP^3})$. If $T_1\in H^0(\mP, \sO_{\mP}(T_{\mP/\mP^3}))$ and 
$T_{\infty}\in H^0(\mP, \sO_{\mP}(T_{\mP/\mP^3}\otimes_{\mP}\sO_{\mP}(-2))$ it is easy to show that $Q\in \left|2T_{\mP/\mP^3}\right|$ and 
$$
Q= \left(T_1^2-F(x_0,x_1,x_2,x_3)T_{\infty}^2=0\right).
$$

\noindent By standard theory of double covers we get the following result.
\begin{lem}\label{invariantideltrifoglio} The Hodge numbers of $Q$ satisfy: $h^{i,j}(Q)=0$ if $i\neq j$, except $h^{1,2}(Q)=h^{2,1}(Q)=10$, $h^{i,i}(Q)=1$, $0\leq i\leq 3$. Moreover $\mathrm{Pic}(Q)=\mathbb Z$.
\end{lem}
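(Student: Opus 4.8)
The plan is to run the standard cohomological machinery for the smooth double cover $\pi_Q\colon Q\to\mP^3$ branched along the quartic $X\in|\sO_{\mP^3}(4)|$. Writing $X\in|2L|$ with $L=\sO_{\mP^3}(2)$, the covering involution $\iota\colon Q\to Q$ splits every cohomology group into its $\iota$-invariant and $\iota$-anti-invariant parts, and since $\iota$ is holomorphic and defined over $\mZ$ this splitting is compatible with both the Hodge decomposition and complex conjugation. The invariant part reproduces the cohomology of the base, $H^q(Q,\Omega^p_Q)^{+}\cong H^q(\mP^3,\Omega^p_{\mP^3})$, which already accounts for the diagonal entries $h^{i,i}=1$ and for the vanishing of the off-diagonal $h^{i,j}$ coming from $\mP^3$. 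For the anti-invariant part I would invoke the standard decomposition of the direct image of forms under a double cover,
$$\pi_{Q*}\Omega^p_Q\cong\Omega^p_{\mP^3}\oplus\bigl(\Omega^p_{\mP^3}(\log X)\otimes L^{-1}\bigr),$$
which identifies the anti-invariant Hodge numbers with $h^{p,q}(Q)^{-}=H^q\bigl(\mP^3,\Omega^p_{\mP^3}(\log X)(-2)\bigr)$. Everything then reduces to four cohomology computations on $\mP^3$.

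For $p=0$ one has $\Omega^0_{\mP^3}(\log X)(-2)=\sO_{\mP^3}(-2)$, and for $p=3$ the log canonical bundle $\Omega^3_{\mP^3}(\log X)=K_{\mP^3}\otimes\sO_{\mP^3}(X)=\sO_{\mP^3}$ again gives $\sO_{\mP^3}(-2)$; both are acyclic on $\mP^3$, so these rows contribute nothing new. The only interesting case is $p=1$, where the residue sequence
$$0\to\Omega^1_{\mP^3}(-2)\to\Omega^1_{\mP^3}(\log X)(-2)\to\sO_X(-2)\to 0$$
together with the Bott vanishing $H^\bullet(\mP^3,\Omega^1_{\mP^3}(-2))=0$ yields $H^q(\mP^3,\Omega^1_{\mP^3}(\log X)(-2))\cong H^q(X,\sO_X(-2))$. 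Since $X$ is a K3 surface, Serre duality gives $h^2(X,\sO_X(-2))=h^0(X,\sO_X(2))=h^0(\mP^3,\sO_{\mP^3}(2))=10$, while $h^0=0$ is clear and $h^1=0$ follows from Riemann--Roch (or Kodaira vanishing). Hence $h^{1,2}(Q)^{-}=10$ and all other anti-invariant numbers vanish; the diamond is completed using $h^{p,q}=h^{q,p}$ (conjugation, giving $h^{2,1}=10$) and Serre duality $h^{p,q}=h^{3-p,3-q}$ on $Q$ (giving $h^{2,2}=h^{1,1}=1$, and so on).

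For the Picard group I would argue that $h^{2,0}(Q)=h^{0,2}(Q)=0$ forces $H^2(Q,\mathbb{C})=H^{1,1}(Q)$, so by the Lefschetz $(1,1)$-theorem the whole of $H^2(Q,\mZ)$ is algebraic and $\rank\Pic(Q)=b_2(Q)=h^{1,1}(Q)=1$. It remains to rule out torsion: as a smooth Fano (indeed rationally connected) threefold $Q$ is simply connected, so $H^2(Q,\mZ)$ is torsion-free by the universal coefficient theorem, and therefore $\Pic(Q)\cong H^2(Q,\mZ)\cong\mZ$, generated by $\pi_Q^{*}\sO_{\mP^3}(1)$.

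The main obstacle is really localized in the single computation $h^2(X,\sO_X(-2))=10$ and in justifying the anti-invariant decomposition of $\pi_{Q*}\Omega^p_Q$: one must be careful that it is the log-pole sheaf $\Omega^p_{\mP^3}(\log X)$, rather than $\Omega^p_{\mP^3}$ itself, that governs the anti-invariant forms, which is exactly where the branch divisor $X$ enters. I note finally that no genericity of $X$ is needed here: the equality $\Pic(Q)=\mZ$ holds for \emph{every} smooth quartic double solid, since $b_2(Q)=1$ is forced by the Hodge computation, in contrast with the Noether--Lefschetz phenomenon one encounters for the surface $S$.
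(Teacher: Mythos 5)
Your argument is correct, and it is essentially the "standard theory of double covers" that the paper invokes: the paper gives no proof of its own but simply cites Welters \cite[p.~8]{W}, and your computation (invariant/anti-invariant splitting, the identification of the anti-invariant part with $H^q(\mP^3,\Omega^p_{\mP^3}(\log X)(-2))$, the reduction to $h^2(X,\sO_X(-2))=10$, and Lefschetz $(1,1)$ plus simple connectedness for $\Pic(Q)\cong\mZ$) is precisely the content of that reference, carried out in full.
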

\begin{proof} See \cite[p.~8]{W}.
\end{proof}

\subsection{Lines of the quartic double solid}
The threefold $Q$ is a Fano variety, that is the anti-canonical divisor $-K_{Q}\sim \rho^{\star}(2H_{\mP^3})$ is ample, and there is a natural notion of {\it{line}} of $Q$.

\begin{defn} A line of $Q$ is a connected subscheme $r\subset Q$ of pure dimension $1$ such that $r\cdot \rho^{\star}(H_{\mP^3})=1$.
\end{defn}

In \cite[p.~374]{T} there is a description of the lines of $Q$. Here we only recall that they come in couples of irreducible rational curves which mutually intersect into two points.

Thanks to the polarisation on $Q$ given by  $\rho^{\star}(H_{\mP^3})$ we can construct the Hilbert scheme of lines of $Q$. An important theorem by Iskovskikh (\emph{cf.} \cite{Is}) was used in \cite{T} to show the following statement.

\begin{prop}\label{liscia} The Hilbert scheme $S_X$  of lines of the quartic double solid is a $2$-to-$1$ \'etale cover $
f\colon S_X\to S$. 
\end{prop}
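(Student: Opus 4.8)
The plan is to construct $f$ explicitly through the double cover $\pi_Q\colon Q\to\mP^3$ and then identify its fibres with the bitangents parameterised by $S$. Since $\rho|_Q=\pi_Q$, the defining condition $r\cdot\rho^\star(H_{\mP^3})=1$ forces the projection $\pi_Q|_r\colon r\to\pi_Q(r)$ to be birational onto a \emph{line} $l\subset\mP^3$: if $\pi_Q(r)$ had degree $d$ and $\pi_Q|_r$ had degree $e$, then $r\cdot\rho^\star(H_{\mP^3})=de$, so necessarily $d=e=1$. Thus every line $r$ of $Q$ lies isomorphically over an honest line $l$ of $\mP^3$, and I set $f([r]):=[l]$.

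I would then analyse the fibre $\pi_Q^{-1}(l)$. Writing the equation of $Q$ in the bundle coordinates $T_1,T_\infty$ as $T_1^2-f_l\,T_\infty^2=0$, its restriction over $l$ is the conic $T_1^2=f_l\,T_\infty^2$ in the $\mP^1$-bundle, where $f_l$ is the binary quartic cutting out $X_{|l}$. As $\pi_Q^{-1}(l)\to l$ has degree $2$ while $r$ has degree $1$, the residual component $r'$ also has degree $1$, so $\pi_Q^{-1}(l)=r\cup r'$ splits into two rational curves. This splitting occurs exactly when $f_l$ is a perfect square $g_l^2$ of a binary quadratic form, which by Definition \ref{definizionedibitangente} is precisely the condition $[l]\in S$. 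Conversely, for each $[l]\in S$ one has $f_l=g_l^2$ and the two lines over $l$ are $r_\pm\colon\{T_1=\pm g_l\,T_\infty\}$, and these are the only ones. Hence $f$ is surjective with fibres of cardinality exactly two.

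For the étale property I would invoke the deck involution $\iota$ of $Q\to\mP^3$, which in these coordinates is $T_1\mapsto -T_1$. It induces an involution $\tilde\iota$ on $S_X$ with $\tilde\iota(r_+)=r_-$, realising $S$ as the quotient $S_X/\tilde\iota$ with $f$ the quotient map. The crucial point is that $\tilde\iota$ has no fixed point: an equality $\iota(r)=r$ would force the equations $T_1=g_l\,T_\infty$ and $T_1=-g_l\,T_\infty$ to define the same curve, hence $g_l\equiv 0$ and $f_l\equiv 0$, i.e. $l\subset X$; this is excluded because $X$ contains no line. Note this remains valid over the hyperflex locus $B_{\rm{hf}}$, where $g_l$ is a square quadratic but still nonzero, so $S_X\to S$ is unramified everywhere---in contrast with the branched cover $Y\to S$ of Proposition \ref{ilrivestimento doppio ramificato}. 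A fixed-point-free involution yields an étale double cover onto its quotient, which gives the claim.

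The main obstacle is not this set-theoretic description but the \emph{scheme-theoretic} assertion that $S_X$ is smooth and $f$ unramified as a morphism of schemes. This amounts to the unobstructedness of the lines $r\subset Q$: since $-K_Q\cdot r=\rho^\star(2H_{\mP^3})\cdot r=2$, the normal bundle has $\deg N_{r/Q}=0$ and $\chi(N_{r/Q})=2$, so the vanishing $H^1(r,N_{r/Q})=0$ makes $S_X$ smooth of dimension two with tangent space $H^0(r,N_{r/Q})$. This is exactly where Iskovskikh's theorem \cite{Is} on the Fano threefold $Q$ enters, as used by Tikhomirov \cite{T}. Granting the smoothness of $S_X$, the free involution $\tilde\iota$ immediately upgrades the two-to-one correspondence to an étale double cover.
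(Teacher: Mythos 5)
Your argument is correct and follows the same route as the sources to which the paper delegates this statement (the paper's own ``proof'' is only a pointer to \cite{T} and \cite{W}): you identify lines of $Q$ with the two components of $\pi_Q^{-1}(l)$ for $l$ a bitangent, observe that the deck involution acts freely on them precisely because $X$ contains no line (correctly noting that this persists over $B_{\rm{hf}}$), and defer the genuine content --- the smoothness of $S_X$, i.e. $H^1(r,N_{r/Q})=0$ --- to Iskovskikh and Tikhomirov, exactly as the paper does. The only point you pass over silently is that the bijective morphism $S_X/\tilde\iota\to S$ is an isomorphism onto $S$ with its natural scheme structure inside $\mathbb G$ (equivalently, that $S$ itself is smooth of dimension two), which is also part of what \cite{W} establishes and is needed to read the conclusion as ``$f$ is \'etale onto $S$'' rather than onto the abstract quotient.
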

\begin{proof}  
The details of the proof are in \cite[Propositions~2.4 and~3.1]{T}. Another self-contained proof is in \cite[Lemma~1.1]{W}. See also \S~3 of \cite{CoZu}.
\end{proof}

\begin{thm}\label{formulae} For the surface $S_X$ we have the formulas:
\[K_{S_X}=f^*(3H_{\mathbb G|S}),\quad q(S_X)=10,\quad p_g(S_X)=101, \quad h^1(S_X,\Omega^1_{S_X}) =220,\quad \text{and}\quad c_2(S_{X})=384.\]
\end{thm}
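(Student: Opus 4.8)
The plan is to obtain all five quantities from two ingredients: the \'etale double cover $f\colon S_X\to S$ of Proposition \ref{liscia}, and the identification of $\Alb(S_X)$ with the intermediate Jacobian $J(Q)$ of the quartic double solid. Since $f$ is \'etale of degree two it transports the canonical class by pull-back, $K_{S_X}=f^*K_S$, doubles $\chi(\sO)$, so that $\chi(\sO_{S_X})=2\chi(\sO_S)$, and doubles the topological Euler number, $c_2(S_X)=2c_2(S)$. Thus the computation splits into the numerical geometry of the bitangent surface $S\subset\mathbb{G}$ on the one hand, and the single transcendental input $q(S_X)$ on the other.

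For the first ingredient I would work inside $\mathbb{G}=\mathbb{G}(2,V^\vee)$, a smooth four-dimensional quadric with $K_{\mathbb{G}}=\sO_{\mathbb{G}}(-4H_{\mathbb{G}})$. The surface $S$ is the locus where the binary quartic $f_\ell$, namely the section of $\mathrm{Sym}^4\sQ$ cut out by $F$, degenerates to a perfect square; this is a codimension-two degeneracy condition, analysed in \cite{W} and encoded by Proposition \ref{ilrivestimento doppio ramificato}. From the resulting description of the normal bundle $N_{S/\mathbb{G}}$ and adjunction, $K_S=(K_{\mathbb{G}}+c_1(N_{S/\mathbb{G}}))|_S=3H_{\mathbb{G}|S}$, which upon pull-back gives the first formula $K_{S_X}=f^*(3H_{\mathbb{G}|S})$. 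A Thom--Porteous / Schubert-calculus evaluation of the class $[S]\in H^4(\mathbb{G})$ then produces the degree $(H_{\mathbb{G}|S})^2=40$ and the Chern number $c_2(S)=192$; with $K_S^2=9\cdot 40=360$, Noether's formula gives $\chi(\sO_S)=(360+192)/12=46$. Doubling, $c_2(S_X)=384$ and $\chi(\sO_{S_X})=92$.

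The transcendental input is $q(S_X)=10$, which is the arithmetic core of the whole paper. Here I would invoke the Abel--Jacobi (cylinder) isomorphism $\Alb(S_X)\cong J(Q)$ underlying Theorem \ref{collino}, so that $q(S_X)=\dim J(Q)=h^{2,1}(Q)=10$ by Lemma \ref{invariantideltrifoglio}. This is also what produces the characteristic feature $q(S)=0$: writing $S=S_X/\iota$ for the covering involution, $H^0(S,\Omega^1_S)=H^0(S_X,\Omega^1_{S_X})^{\iota}$, and $\iota$ acts by $-1$ on $H^{1,0}(S_X)\cong H^{2,1}(Q)$, so the invariant part vanishes; this matches $\chi(\sO_S)=1+p_g(S)=46$, i.e. $p_g(S)=45$.

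Combining the two ingredients finishes the proof: $p_g(S_X)=\chi(\sO_{S_X})-1+q(S_X)=92-1+10=101$, and from $c_2(S_X)=2-4q(S_X)+2p_g(S_X)+h^{1,1}(S_X)$ we solve $h^1(S_X,\Omega^1_{S_X})=h^{1,1}(S_X)=384-2+40-202=220$. I expect the main obstacle to lie in the two hard inputs rather than in the bookkeeping: on the algebraic side, the Thom--Porteous computation that pins down $\deg S=40$ and $c_2(S)=192$, and on the transcendental side the Abel--Jacobi identification $\Alb(S_X)\cong J(Q)$ that forces $q(S_X)=10$. Once these are secured, the remaining four numbers are a mechanical consequence of Noether's formula and of the Hodge decomposition across the \'etale cover.
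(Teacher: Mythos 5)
The paper does not actually prove Theorem \ref{formulae}: its ``proof'' is a citation to Welters \cite{W}, pp.~41--45. So any genuine derivation, including yours, is by necessity a different route, and your overall architecture is sound: all five numbers are mutually consistent, the reduction across the \'etale double cover ($K_{S_X}=f^*K_S$, $\chi(\sO_{S_X})=2\chi(\sO_S)$, $c_2(S_X)=2c_2(S)$) is correct, the arithmetic ($\deg S=40$, $K_S^2=360$, $c_2(S)=192$, Noether, then $p_g(S_X)=101$ and $h^{1,1}(S_X)=220$) checks out, and your observation that the covering involution acts by $-1$ on $H^{1,0}(S_X)\cong H^{2,1}(Q)$ (because the Galois involution of $Q\to\mP^3$ acts by $-1$ on $H^3(Q)$, as $H^3(\mP^3)=0$) is a correct and useful way to see $q(S)=0$, which the paper uses elsewhere without comment.

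Two caveats. First, the step ``a Thom--Porteous/Schubert evaluation of $[S]$ produces $c_2(S)=192$'' claims more than the class $[S]\in H^4(\mathbb G)$ can deliver: that class gives the bidegree $(12,28)$ and hence $\deg S=(H_{\mathbb G|S})^2=40$, but not the topological Euler number of $S$, nor $c_1(N_{S/\mathbb G})=7H_{\mathbb G|S}$, which your adjunction step silently requires. The perfect-square locus in the $\mP^4$ of binary quartics is a (degree-four, non--complete-intersection) Veronese-type surface, so $S$ is not the zero scheme of a section of a rank-two bundle; pinning down $c_2(S)$ needs either Harris--Tu-type formulas for degeneracy loci or, as in \cite{W}, the double cover $\pi\colon Y\to S$ of Proposition \ref{ilrivestimento doppio ramificato} together with the geometry of $Y$ inside $\mP(\sQ)$. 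These two numbers are precisely the content of the cited pages, so your proof defers the real work to the same source. Second, deriving $q(S_X)=10$ from the Abel--Jacobi isomorphism $\Alb(S_X)\cong J(Q)$ is admissible inside this paper, which quotes that isomorphism as a black box, but be aware that in \cite{W} the equality $q(S_X)=h^{2,1}(Q)=10$ is obtained first by a direct cohomological computation and then feeds into the proof of the Abel--Jacobi theorem; your derivation inverts the logical order of the source, though it introduces no circularity at the level of the present paper's quoted statements.
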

\begin{proof} See \cite[Cohomological study, pp. 41-45]{W}.
\end{proof}

\subsection{Abelian varieties and quartic double solid}
Some properties of  the Abel--Jacobi map $S_X\to J(Q)$ where $J(Q)$ is the intermediate Jacobian of the quartic double solid $Q$ are well-known.
\begin{thm}
The Abel--Jacobi map ${\rm{Alb}} ( S_X)\to J(Q)$ is an isomorphism of Abelian varieties.
\end{thm}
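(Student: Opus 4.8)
The plan is to realise the stated map explicitly as an Abel--Jacobi morphism attached to the universal family of lines, and then to prove that it is an isomorphism by analysing the associated cylinder (or ``tube'') homomorphism. First I would fix a base line $r_0\subset Q$, corresponding to a point $s_0\in S_X$, and recall that $J(Q)$ is the intermediate Jacobian $J^2(Q)=H^3(Q,\mC)/(F^2H^3(Q)+H^3(Q,\mZ))$. By Lemma~\ref{invariantideltrifoglio} we have $h^{3,0}(Q)=0$, so $F^2H^3(Q)=H^{2,1}(Q)$ and $\dim J(Q)=h^{2,1}(Q)=10$; moreover $\Pic(Q)=\mZ$ forces $H_2(Q,\mZ)$ to have rank one, so that any two lines of $Q$ are homologous and every difference $r_s-r_0$ is a homologically trivial $1$-cycle. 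Sweeping out such cycles by the universal family $\sL\subset S_X\times Q$ of lines (Proposition~\ref{liscia}) therefore defines an Abel--Jacobi morphism $\psi\colon S_X\to J(Q)$ with $\psi(s_0)=0$, which by the universal property of the Albanese factors through a homomorphism of abelian varieties $\bar\psi\colon \Alb(S_X)\to J(Q)$.

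Next I would reduce the statement to a purely Hodge-theoretic assertion. By Theorem~\ref{formulae} we have $q(S_X)=10=\dim J(Q)$, so $\bar\psi$ is a morphism between abelian varieties of the same dimension, and it suffices to prove that the induced cylinder homomorphism
\begin{equation*}
\Phi\colon H_1(S_X,\mZ)\longrightarrow H_3(Q,\mZ),\qquad \Phi=q_*\,p^*,
\end{equation*}
attached to the incidence correspondence $p,q\colon \sL\to S_X,Q$, is an isomorphism of integral Hodge structures (both of rank $2\cdot 10=20$). Dually this is the assertion that the codifferential $\bar\psi^{*}\colon H^{2,1}(Q)\to H^0(S_X,\Omega^1_{S_X})$, again given by the correspondence $p_*q^{*}$, is an isomorphism of the two $10$-dimensional spaces; once $\Phi$ is an isomorphism of polarised Hodge structures, the complex tori $\Alb(S_X)$ and $J(Q)$ are canonically identified and $\bar\psi$ is the asserted isomorphism.

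The heart of the argument, and the step I expect to be the main obstacle, is the proof that $\Phi$ is an isomorphism. I would first establish surjectivity: the image of $\Phi$ is a sub-Hodge structure of $H_3(Q,\mZ)$, and since the evaluation map $q\colon \sL\to Q$ is dominant, the lines sweep out $Q$, so a Lefschetz-type monodromy and vanishing-cycle analysis of the family of lines should force the image to be all of $H_3(Q,\mZ)$, in the spirit of the Clemens--Griffiths treatment of the cubic threefold. Because source and target have equal rank, surjectivity already yields an isogeny; to upgrade it to an isomorphism of abelian varieties I would check that $\Phi$ transports the cup-product polarisation on $H_3(Q,\mZ)$ to the intersection polarisation on $H_1(S_X,\mZ)$ with multiplicity one, which pins down the degree and forces $\Phi$ to be an integral isomorphism. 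Verifying this compatibility of polarisations for the incidence correspondence is the delicate point.

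Finally, I would note that all the inputs required above---the construction of the Abel--Jacobi map, the surjectivity of the cylinder homomorphism, and the compatibility of polarisations for the quartic double solid---are carried out in \cite{T} and \cite{W}, so the present statement may be quoted from that analysis. The role of the computation sketched here is to record why the numerical coincidence $q(S_X)=h^{2,1}(Q)=10$ is precisely what makes $\bar\psi$ an isomorphism rather than merely a dominant morphism.
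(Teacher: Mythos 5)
Your proposal is correct and ends up where the paper does: the paper's entire proof of this theorem is a citation of \cite[Theorem 4.1]{W}, and you likewise defer the two genuinely hard points (surjectivity of the cylinder map onto the full integral lattice and compatibility of polarisations) to \cite{T} and \cite{W}. Your preliminary sketch --- constructing $\bar\psi\colon \Alb(S_X)\to J(Q)$ from the incidence correspondence and observing that $q(S_X)=h^{2,1}(Q)=10$ reduces everything to the cylinder homomorphism being an isomorphism of polarised integral Hodge structures --- is an accurate outline of what those references actually prove, so this is essentially the same approach with useful added context.
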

\begin{proof}
See \cite[Theorem 4.1]{W}.
\end{proof}

\begin{thm}\label{collino} The differential of the Albanese map $\alpha\colon S_X\to {\rm{Alb}}(S_X)$ is injective at every point.
\end{thm}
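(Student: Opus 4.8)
The plan is to verify directly the two requirements of the notion of closed immersion recalled just above: that $\alpha$ has degree one onto its image and that its differential $d_{[r]}\alpha$ is injective at every point $[r]\in S_X$. By the isomorphism $\mathrm{Alb}(S_X)\cong J(Q)$ recalled above, and by Lemma \ref{invariantideltrifoglio} together with Theorem \ref{formulae}, we have $\dim J(Q)=h^{1,2}(Q)=10=q(S_X)$; I therefore identify $\alpha$ with the Abel-Jacobi map sending a line $r\subset Q$ to its class in $J(Q)$ and work on the Hilbert scheme of lines $S_X$, which is smooth by Proposition \ref{liscia}. Both requirements are open conditions in the family of smooth quartics without lines, so it suffices to establish them for a general such $X$; in particular I may freely use $\mathrm{Pic}(Q)=\mZ$.

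First I would treat the immersion property. For a line $r\subset Q$ we have $r\cong\mP^1$, $-K_Q\cdot r=\rho^{\star}(2H_{\mP^3})\cdot r=2$ and $g(r)=0$, so adjunction gives $\deg N_{r/Q}=0$; hence for a general line $N_{r/Q}\cong\sO_r\oplus\sO_r$ and $T_{[r]}S_X=H^0(r,N_{r/Q})$ has dimension $2$, matching $\dim S_X$. The differential is the infinitesimal Abel-Jacobi map $d_{[r]}\alpha\colon H^0(r,N_{r/Q})\to H^1(Q,\Omega^2_Q)$. Dualising by Serre duality on $Q$ (which gives $H^1(Q,\Omega^2_Q)^\vee\cong H^2(Q,\Omega^1_Q)$) and on $r$ (which gives $H^0(r,N_{r/Q})^\vee\cong H^1(r,N^\vee_{r/Q}\otimes\omega_r)$), the injectivity of $d_{[r]}\alpha$ becomes equivalent to the surjectivity of a restriction-type map $H^2(Q,\Omega^1_Q)\to H^1(r,N^\vee_{r/Q}\otimes\omega_r)$. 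I would compute this using the filtration of the restricted cotangent sheaf, namely $0\to\det N^\vee_{r/Q}\to\Omega^2_Q|_r\to N^\vee_{r/Q}\otimes\Omega^1_r\to 0$, together with the explicit double-cover presentation $Q=(T_1^2=F\,T_\infty^2)$ and the vanishing available on the Fano threefold $Q$, so as to reduce the surjectivity to an elementary cohomology statement on $r\cong\mP^1$.

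Next, degree one. Here I would argue that for general $X$ a general line is rigid in its Abel-Jacobi class: if $\alpha$ failed to be generically injective, then either a positive-dimensional family of lines would share a common class, sweeping out a surface ruled by mutually rationally equivalent lines, or $S_X$ would be a nontrivial cover of its image. Both possibilities produce extra algebraic structure on $Q$, or on the principally polarised abelian variety $(J(Q),\Theta)$, incompatible with $\mathrm{Pic}(Q)=\mZ$ and with the indecomposability of $J(Q)$ for general $X$; this is precisely the type of genericity statement established in \cite{T} and \cite{W}. Combined with the immersion property, this shows that $\alpha$ has degree one onto its image, so that $\alpha$ is a closed immersion in the sense defined above.

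The main obstacle is the uniform injectivity of the differential over all of $S_X$: the computation above is transparent at lines with balanced normal bundle $N_{r/Q}\cong\sO_r\oplus\sO_r$, but it must also be controlled at the special lines where $N_{r/Q}\cong\sO_r(1)\oplus\sO_r(-1)$, where the restriction map can a priori drop rank. It is exactly to exclude such pathological lines, and to guarantee the indecomposability used for degree one, that the generality hypothesis on $X$ is needed; organising the residue/duality computation so that it remains surjective along these jumping lines is the delicate technical point, handled through the analysis of lines on the quartic double solid carried out in \cite{T} and \cite{W}.
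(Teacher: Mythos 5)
The paper does not actually prove Theorem \ref{collino}: its ``proof'' is the single citation \cite[Corollary A.3, p.~53]{W}. Your outline reproduces the standard Clemens--Griffiths-type strategy (tangent bundle theorem for the Fano surface of lines, adapted to the quartic double solid), which is indeed what underlies Welters' result -- so the overall plan is sound. But as written it is not a proof: at both decisive points you state what would need to be shown and then defer to \cite{T} and \cite{W}, so in substance your argument collapses back to the same citation the paper makes, with the added risk that the intermediate reductions are themselves unjustified.

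Concretely, the gaps are these. (1) The injectivity of the differential is reduced to the surjectivity of a map $H^2(Q,\Omega^1_Q)\to H^1(r,N^\vee_{r/Q}\otimes\omega_r)$, but that surjectivity is never established; naming the exact sequence $0\to\det N^\vee_{r/Q}\to\Omega^2_{Q}|_r\to N^\vee_{r/Q}\otimes\Omega^1_r\to 0$ and invoking ``the vanishing available on the Fano threefold'' is not an argument, and you yourself flag that the case $N_{r/Q}\cong\sO_r(1)\oplus\sO_r(-1)$ is unresolved. (Minor point: the differential naturally lands in $H^2(Q,\Omega^1_Q)=H^{1,2}$, the tangent space of $J(Q)$, not in $H^1(Q,\Omega^2_Q)$; your subsequent dualisation happens to repair this.) (2) The degree-one step is only a heuristic: two lines with the same Abel--Jacobi image do not obviously produce a ruling of $Q$ or a decomposition of $(J(Q),\Theta)$, and no implication from ``$\alpha$ not generically injective'' to ``$\Pic(Q)\neq\mZ$'' is given; the actual argument in \cite{W} goes through the incidence divisors $D_l$ and the difference map, none of which appears here. (3) The reduction ``both properties are open, so it suffices to treat a general $X$'' is vacuous unless you exhibit at least one $X$ (or a degeneration) for which they hold, which you never do. To make this a genuine proof you would have to either carry out the cohomological computation in (1) and the global injectivity argument in (2), or simply cite \cite[Corollary A.3]{W} as the paper does.
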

\begin{proof} See \cite[Corollary A. 3]{W}.\end{proof}

\subsection{Special divisor inside the bitangents surface.}
For any $[l]\in S_X$ denote by 
$$
D_l:=\overline{\left\{[m]\in S_X\mid m\cap l\neq\emptyset,\ m\neq l\right\}}.
$$
Analogously if $[\overline l]\in S$ we can consider 

$$
D_{\overline{l}}:=\overline{\left\{[\overline m]\in S\mid \overline{m}\cap {\overline{l}}\neq\emptyset, \ \overline{m}\neq \overline{l}\right\}}.
$$

For any $[\overline l]\in S$ denote by $l'$ and $l''$ the two lines inside the double solid $Q$ such that $\pi_Q(l')=\pi_Q(l'')=\overline l$, that is, using notation of Proposition~\ref{liscia} we can write:
$$f^{-1}([\overline l])=\left\{[l'],[l'']\right\}.$$
It is known that
$$
f^{\star}D_{\overline{l}}=D_{l'}+D_{l''}
$$
(\emph{cf}. \cite[Formula 5.1, p. 61]{W}). Let $\{p,\, q\}=\overline l\cap X$. Note that the tangent plane to $X$ at the points $p$ intersects $X$ in a plane quartic curve with a node at $p$. By Riemann--Hurwitz formula, it admits six bitangents through $p$.
Let ${\overline{r}}_1,\,{\overline{r}}_2,\,{\overline{r}}_3, \,{\overline{r}}_4, \,{\overline{r}}_5$ be the other five bitangents passing through $p$ and ${\overline{s}}_1,\,{\overline{s}}_2,\,{\overline{s}}_3, \,{\overline{s}}_4, \,{\overline{s}}_5$ those through $q$. The following proposition holds.

\begin{prop} Let $\overline l$ be a general point of $S$. The curve $D_{\overline{l}}$ is irreducible and it has exactly $11$ nodes corresponding to $\overline l$,  ${\overline{r}}_1,\,{\overline{r}}_2,\,{\overline{r}}_3,\, {\overline{r}}_4, \,{\overline{r}}_5,\,{\overline{s}}_1,\,{\overline{s}}_2,\,{\overline{s}}_3, \,{\overline{s}}_4,$ and ${\overline{s}}_5$. The curve $D_{l'}$ is irreducible, it has only a node at the point $[l'']$. Moreover
\[D_{l'}\cap D_{l''}=\left\{ r_1',r_1'',\ldots,r_5',r_5'',s_1',s_1'',\ldots,s_5',s_5''\right\}.\]
For a general $[l]\in S_X$ , the geometric  genus of $D_l$ is $70$. For any $[l]\in S_X$, $D_l$ is ample and we have
$D_l^2=20$, $h^{0}(S_X\sO_{S_X}(D_l))=1$ and $h^{1}(S_X\sO_{S_X}(D_l))=71$ .
\end{prop}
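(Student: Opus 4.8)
The plan is to establish each assertion by leveraging the étale double cover $f\colon S_X\to S$ of Proposition~\ref{liscia} together with the formula $f^{\star}D_{\overline l}=D_{l'}+D_{l''}$ and the numerical data of Theorem~\ref{formulae}. First I would set up the intersection theory on $S_X$. Since $f$ is étale of degree $2$, we have $f^{\star}H_{\mathbb G|S}=H_{\mathbb G|S_X}$ (pulling back the Pl\"ucker polarisation), and $K_{S_X}=f^{\star}(3H_{\mathbb G|S})$. The key numerical input I would compute is the self-intersection $D_l^2=20$; this should follow from a Schubert-calculus computation in the Grassmannian $\mathbb G$, identifying $D_l$ as the incidence divisor of lines meeting a fixed line, whose class in $\Pic(\mathbb G)\cong\mZ H_{\mathbb G}$ is a multiple of the Pl\"ucker polarisation, then restricting to $S_X$ and using the degree of $S_X$ in the Pl\"ucker embedding. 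Once $D_l^2$ is known, ampleness of $D_l$ follows from the Nakai--Moishezon criterion, since $D_l$ is an effective divisor with positive self-intersection that is a positive rational multiple of the very ample $H_{\mathbb G|S_X}$ (equivalently, $D_l$ lies in the ample cone because $\Pic(Q)=\mZ$ forces the relevant cones to be one-dimensional over the Abel--Jacobi image, cf. Lemma~\ref{invariantideltrifoglio}).

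Next I would compute the cohomology of $\sO_{S_X}(D_l)$. The value $h^0=1$ says $D_l$ is the unique effective divisor in its class, which I expect to prove either by showing the linear system is zero-dimensional (rigidity of the incidence correspondence, since deforming $l$ moves $D_l$ but does not give a linear pencil) or by a direct $h^0$ computation via Riemann--Roch combined with vanishing. For this I would invoke Riemann--Roch on the surface $S_X$:
\begin{equation*}
\chi(\sO_{S_X}(D_l))=\chi(\sO_{S_X})+\tfrac12 D_l\cdot(D_l-K_{S_X}).
\end{equation*}
Here $\chi(\sO_{S_X})=1-q+p_g=1-10+101=92$, and $D_l\cdot K_{S_X}=3\,D_l\cdot H_{\mathbb G|S_X}$ is obtained from $D_l^2=20$ and the proportionality of $D_l$ to $H_{\mathbb G|S_X}$. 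The combination $\chi=h^0-h^1+h^2$ together with Serre duality $h^2(\sO_{S_X}(D_l))=h^0(\sO_{S_X}(K_{S_X}-D_l))$ should pin down $h^1=71$ once the extremal values $h^0=1$ and $h^2=0$ are established; the vanishing $h^2(\sO_{S_X}(D_l))=0$ follows because $K_{S_X}-D_l=3H_{\mathbb G|S_X}-D_l$ has negative degree against the ample $H_{\mathbb G|S_X}$ (so no effective sections), and $h^0=1$ comes from the rigidity argument above.

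For the geometric assertions, I would first treat $D_{l'}$ on $S_X$ and then push down to $D_{\overline l}$ on $S$. Irreducibility of $D_{l'}$ follows from a monodromy argument on the universal family of lines of $Q$: for general $l'$ the lines meeting $l'$ form an irreducible family because the Hilbert scheme $S_X$ is irreducible and the incidence condition is irreducible over it. The node of $D_{l'}$ at $[l'']$ arises because $l'$ and its partner $l''$ meet in two points (recalled after Definition of lines of $Q$: the two lines over a bitangent meet in two points), giving a single node where the two local branches of $D_{l'}$ corresponding to the two intersection points cross. Then using $f^{\star}D_{\overline l}=D_{l'}+D_{l''}$ and the étale quotient, $D_{\overline l}=f(D_{l'})=f(D_{l''})$ inherits nodes from (i) the branch points where $l'$ and $l''$ meet and (ii) the ramification-type identification coming from the five other bitangents through each of the two contact points $p,q$ of $\overline l\cap X$. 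This accounts for the $11$ nodes: one from $\overline l$ itself (the image of the node $[l'']$) and ten from $\overline r_1,\dots,\overline r_5,\overline s_1,\dots,\overline s_5$. Finally the geometric genus $70$ of $D_l$ follows by adjunction: the arithmetic genus is
\begin{equation*}
p_a(D_l)=1+\tfrac12 D_l\cdot(D_l+K_{S_X})=1+\tfrac12\bigl(D_l^2+3\,D_l\cdot H_{\mathbb G|S_X}\bigr),
\end{equation*}
and subtracting the $\delta$-invariant contributions of the single node of $D_{l'}$ (one node lowers genus by one) yields the stated geometric genus. The combinatorial description $D_{l'}\cap D_{l''}=\{r_1',r_1'',\dots,s_5',s_5''\}$ is then read off from the ten bitangents through $p$ and $q$, each lifting to two lines in $Q$.

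The main obstacle I anticipate is the bookkeeping of the singularities under the double cover, specifically verifying that the incidence divisors $D_{l'},D_{l''}$ downstairs glue to produce exactly $11$ nodes on $D_{\overline l}$ with the claimed labelling, rather than higher-order singularities or spurious intersection points. This requires a careful local analysis of the universal line correspondence near the contact points $p,q$ and near the mutual intersection of $l'$ and $l''$, controlling how the étale map $f$ identifies branches; the numerical parts (self-intersection, $\chi$, adjunction) are routine once $D_l^2=20$ is fixed, but the geometric identification of the nodes is the delicate step and I would expect the authors to defer it to the detailed intersection-theoretic computations in \cite{W}.
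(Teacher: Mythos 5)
The paper does not actually prove this proposition: it is quoted verbatim from Welters and the ``proof'' is only the citation \cite[Section 3, Proposition 5.2, Corollary 5.4]{W}. So your attempt at a self-contained argument is welcome in principle, but it contains a concrete error in the cohomological part. From the data of Theorem \ref{formulae} one gets $\chi(\sO_{S_X})=1-10+101=92$ and, by Noether's formula, $K_{S_X}^2=12\cdot 92-384=720$, hence $H_{\mathbb G|S_X}^2=80$. Adjunction with $p_g(D_l)=70$ and one node forces $D_l\cdot K_{S_X}=120$, i.e.\ $D_l\cdot H_{\mathbb G|S_X}=40$ (consistent with $D_l$ being numerically $\tfrac12 H_{\mathbb G|S_X}$, as equality holds in the Hodge index inequality). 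Riemann--Roch then gives $\chi(\sO_{S_X}(D_l))=92+\tfrac12(20-120)=42$, so the stated values $h^0=1$ and $h^1=71$ force $h^2(\sO_{S_X}(D_l))=112$, \emph{not} $0$. Your vanishing argument is simply wrong: $K_{S_X}-D_l\equiv \tfrac52 H_{\mathbb G|S_X}$ has \emph{positive} degree $200$ against the ample polarisation (it is big, which is why it carries $112$ sections), so you cannot conclude $h^2=0$, and with $h^2=0$ your own Riemann--Roch computation would give $h^1=h^0-42=-41$, an absurdity. The computation of $h^1=71$ therefore requires an independent determination of $h^0(K_{S_X}-D_l)$ (or a direct analysis of $H^1$, e.g.\ via the restriction sequence $0\to\sO_{S_X}\to\sO_{S_X}(D_l)\to\sO_{D_l}(D_l)\to 0$ and the Abel--Jacobi theory), which is precisely the content deferred to \cite{W}.

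Two smaller points. First, your proportionality claim $D_l\equiv \lambda H_{\mathbb G|S_X}$ is used to compute $D_l\cdot K_{S_X}$, but it is not justified by ``$\Pic Q=\mZ$'': the surface $S_X$ has $q=10$ and $h^{1,1}=220$, so its N\'eron--Severi group is not obviously of rank one, and in your write-up the proportionality is effectively assumed in order to derive the very numbers it is supposed to produce. Second, the geometric bookkeeping of the $11$ nodes and of $D_{l'}\cap D_{l''}$ is, as you yourself note, only sketched; since the paper offers no argument either, this is acceptable as an outline, but the cohomology step above is a genuine error rather than a gap in detail.
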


\begin{proof}\cite[Section 3, Proposition 5.2, and Corollary 5.4]{W}.
\end{proof}

\subsection{Elliptic curves - Proof of Theorem~\ref{Th:C}}
We need now to recall that there is an identification between ${\rm{Alb}}(S_X)$ and ${\rm{Pic^{0}}}(S_X)$. Fix a point $[l_0]\in S_X$. We consider 
$\alpha([l_0])=0\in {\rm{Alb}}(S_X)$. 
We can define a morphism $$S_X\ni ([l])\longmapsto ([D_l]-[D_{l_0}]) \in {\rm{Pic^{0}}} (S_X).$$
Since $\alpha(S_X)$ generates ${\rm{Alb}}(S_X)$ as a group we may define the morphism
$\Phi_{[l_0]}\colon {\rm{Alb}}(S_X)\to {\rm{Pic^{0}}}(S_X)$ such that $\Phi_{[l_0]}\colon \alpha([l])\mapsto  [D_l]-[D_{l_0}]$, which is {\it{an isomorphism}}; see \cite[p. 301]{CG} or \cite[ (6.8) p. 77]{W}.

\smallskip

We are now ready to prove Theorem~\ref{Th:C}.
  
\begin{proof}[Proof of Theorem~\ref{Th:C}] First we show that if such a curve exists then it is smooth inside $S$. Indeed assume that $C\subset S$ is a curve such that $g(\widetilde C)=1$ where $\nu\colon \widetilde C\to C$ is its normalisation. 

By Proposition \ref{liscia} we can consider $\widetilde C':=\widetilde C\times_{S}S_X$. By Theorem \ref{collino} there is a non constant morphism $\widetilde C'\to {\rm{Alb}}(S_X)$. Since there are no rational curves inside an Abelian variety it follows that  $\widetilde C'$ is smooth. Since the covering $\widetilde C'\to\widetilde C$ is \'etale, by Theorem \ref{collino} it follows that  $\widetilde C'\hookrightarrow  {\rm{Alb}}(S_X)$. Then $\widetilde C'\to C$ is \'etale. This implies that $C$ is smooth.

Now consider again the $2$-to-$1$ \'etale cover $f\colon S_X\to S$ of Proposition \ref{liscia} and let $\overline E\subset S$ be an elliptic curve. Then for $f^{\star}(\overline E)$ two cases can occur:  $f^{\star}(\overline E)=E$ is a smooth elliptic curve and 
$f_{|E}\colon E\to \overline E$ is a $2$-to-$1$ \'etale covering or $f^{\star}(\overline E)=E_1+E_2$ where $f_{|E_{i}}\colon E_{i}\to \overline E$ is an isomorphism ($i=1,2$).
In both cases there exists an elliptic curve, which we denote $E$ since no confusion can arise, which is contained inside $S_X$ and such that the Albanese morphism $\alpha\colon S_X\to{\rm{Alb}}(S_X)$ restricted to $E$ is an embedding. We consider the case when 
$E=f^{\star}(\overline E)$ is an irreducible elliptic curve. The case where $f^{\star}(\overline E)$ is the union of two elliptic curves can be treated similarly.

We fix a point $[l_0]\in E$ and we consider the isomorphism $\Phi_{[l_0]}\colon {\rm{Alb}}(S_X)\to {\rm{Pic^{0}}}(S_X)$, which is induced by the morphism $\Phi_{[l_0]}\circ\alpha([l])=[D_{l}]- [D_{l_0}]$. Since $E\subset S_X$ and $E\hookrightarrow{\rm{Alb}}(S_X)$ 
this gives a surjective morphism
\[{\rm{Pic}}^{0}\left({\rm{Alb}}(S_X)\right)\simeq {\rm{Pic}}^{0}(S_X)\To  {\rm{Pic}}^{0}(E)\]
which induces the surjective morphism $h\colon S_X\to {\rm{Pic}}^{0}(E)$ given by $[l]\mapsto [D_{l\mid E}]- [D_{l_{0}\mid E}]$. 

 Notice that if $[l],[m]\in E$ then $D_{l}$ is not linearly equivalent to $D_{m}$ since $\Phi_{[l_{0}]|E}\colon E\to {\rm{Pic^{0}}}(S_X)$ is injective. 
 Let $\sigma$ be the involution of $S_X$ associated to the $2$-to-$1$ cover $S_X\to S$. Now, for every $[l]\in S_X$ we have $(\sigma^{\star}D_{l})_{|E}= \sigma^{\star}_{|E}(D_{l| E})$. This means that the morphism $h\colon S_X\to {\rm{Pic}}^{0}(E)$ is $\sigma$-equivariant. In particular its Stein factorisation $h=h_1\circ\tau$ is given by a fibration (that is a surjective morphism) $h_{1}\colon S_X\to E_1$ followed by an \'etale covering $\tau\colon E_1\to {\rm{Pic}}^{0}(E)$. By naturality even the morphism $h_{1}\colon S_X\to E_1$ is $\sigma$-equivariant. Since $E/\langle \sigma_{|E}\rangle=\overline E$ is an elliptic curve, the $\sigma$-equivariance of the Stein factorisation gives a non constant morphism $S\to {\rm{Pic}}^{0}(\overline E)$. If $f^{\star}(\overline E)=E_1+E_2$ then $E_1$ and $E_2$ are two copies of $\overline E$ and $\sigma$ is given by an $E$-automorphism without fixed points. Then we find two surjective morphisms $h_1\colon S_X\to  {\rm{Pic}}^{0}(E_1)$ and $h_2\colon S_X\to  {\rm{Pic}}^{0}(E_2)$ which are exchanged by $\sigma$. This implies again that we have a surjective morphism $S\to{\rm{Pic}}^0( {\overline E})$. This is a contradiction since $q(S)=0$ and it proves Theorem~\ref{Th:C}.
\end{proof}

\section{Proof of Theorem~\ref{Th:A}}

Let $X$ be a smooth quartic surface. From Theorem \ref{curel} we know that it contains an algebraic family of curves of geometric genus one; 
this family is parametrized by an algebraic variety, necessarily defined over the algebraic closure of $\kappa$. Pick one such curve,  say $C\subset X$, defined over the algebraic closure of $\kappa$, hence also over a finite extension of $\kappa$. After further enlarging this field if necessary, we can find a number field $\kappa'\supset\kappa$ such that $C$ is defined over $\kappa'$ and contains infinitely many rational points in $\kappa'$.

Let $p\in C(\kappa')$ be one such rational point. The tangent curve $X_p=X\cap T_p(X)$ has geometric genus $\leq 2$ and is singular at $p$. (For all but finitely many $p\in C(\kappa')$ its genus is exactly $2$, but this fact will not be used.) Since $X_p$ is a singular quartic with a node at $p$, one can define a morphism, over the field $\kappa'$, $X_p\to \mP^1$, as follows: identify $\mP^1$ with the pencil of lines in $T_p(X)$ passing through $p$ (note that the identification is defined over $\kappa'$) and send a generic point $q\in X_p$ to the line joining $p$ and $q$. This is a degree two morphism, identifying the quotient of $X_p$ by its hyperelliptic involution with the line $\mP^1$. Now, every rational point $x\in \mP^1(\kappa')$ has two quadratic (or rational) pre-images in $X_p$, hence the set of quadratic points in $X_p$ is infinite.

Moving $p$ in the infinite set $C(\kappa')$ of rational points on $C$, one obtains in this way a Zariski-dense set of quadratic points on $X$.\qed

\begin{rmk}
An alternative idea would be to use two genus one curves $C$ and $C'$; again after enlarging the ground field we can suppose that both curves possess infinitely many rational points. Any line joining a rational point on $C$ to a rational point on $C'$ is defined over the enlarged ground field and intersects the quartic in two more points which are quadratic (or rational). 
\end{rmk}

\section{Proof of Theorem~\ref{Th:B}}

Recall that the statement  amounts to proving the finiteness of the rational points on the surface $S$ of bitangents of $X$, over every number field. The proof  is achieved in two steps.

\begin{enumerate}[label=(\arabic*)]
\item By Proposition \ref{liscia} and by the Chevalley--Weil Theorem (see \emph{e.g.} \cite{CZ-Hilb}) proving the finiteness of rational points on $S$ over every number field amounts to proving the same result for the surface $S_X$. We shall then work on $S_X$ and fix a number field $\kappa'\supset\kappa$.
By Theorem~\ref{collino} we know that the Albanese map  $\alpha\colon S_X\rightarrow \mathrm{Alb}_{S_X}$ is a closed immersion.
 Then, by Faltings' theorem  (\emph{cf.} \cite[Theorem~1]{Fal}), we obtain at once that  all but finitely many  rational points of $S_X(\kappa')$ lie on the union of finitely many curves. We have thus proved the degeneracy of rational points on $S_X$, hence on $S$.
\item To end the proof of Theorem~\ref{Th:B}, we must pass from degeneracy to finiteness. Indeed we have to exclude the infinite families of rational points lying on finitely many curves on $S_X$. Again by Faltings' theorem, such curves must have geometric genus $\leq 1$; since the Albanese map $S_X\to \mathrm{Alb}_{S_X}$ has an injective differential, no curve is contracted by this map; now, since  Abelian varieties cannot contain any rational curve, the surface $S_X$ also does not contain curves of genus zero. The same holds for $S$, since the covering $f\colon S_X\to S$ is \'etale. It remains to exclude the presence  of  curves of genus $1$ on $S_X$ (again, this is the same as excluding such curves on $S$). This is exactly the content of Theorem~\ref{Th:C}.
\end{enumerate}



\begin{thebibliography}{vLui07+++}

\bibitem[BT00]{Bom-Tsch} F. Bogomolov and Y. Tschinkel, \emph{Density of rational points on elliptic $K3$ surfaces}, Asian J. Math. \textbf{4} (2000), no.~2, 351--368. 

\bibitem[Cam04]{Campana} F. Campana, \emph{Orbiforlds, special varieties and classification theory}, Ann. Inst. Fourier \textbf{54} (2004), no.~3, 499--630.
 
\bibitem[CG72]{CG} C.~H.~Clemens and P.~A.~Griffiths, \emph{The intermediate Jacobian of the cubic threefold}, Ann. of Math. \textbf{95} (1972), 281--356.


\bibitem[CZ17]{CZ-Hilb} P.~Corvaja and U.~Zannier, \emph{On the Hilbert property and the fundamental group of algebraic varieties}, Math. Z. \textbf{286} (2017), 579--602.

\bibitem[CZ19]{CoZu} P.~Corvaja and F.~Zucconi, \emph{Bitangents to a quartic surface and infinitesimal deformations}, preprint \arXiv{1910.01365} (2019).

\bibitem[Fal91]{Fal} G.~Faltings, \emph{Diophantine Approximation on Abelian Varieties}, Annals of Math. \textbf{133} (1991), no.~3, 549--576.


\bibitem[HT00]{Harris-Tsch} J.~Harris and Y.~Tschinkel, \emph{Rational points on quartics}, Duke Math. J.~\textbf{104} (2000), no.~3, 477--500.

\bibitem[HT08]{Has-Tsch} \bysame, \emph{Potential density of rational points over $K3$ surfaces over function fields}, American J. Math.~\textbf{130} (2008), no.~5, 1263--1278.


\bibitem[Isk77]{Is} 
V.~A.~Iskovskih, \emph{Fano 3-folds}, Math. USSR Izv.~\textbf{11} (1977), 485--527. Translated from Izv. Akad. Nauk SSSR Ser. Mat.~\textbf{41} (1977), 516--562.

\bibitem[vLui07]{Luijk} R. van Luijk, \emph{K3 surfaces with Picard number one and infinitely many rational points}, Algebra Number Theory~\textbf{1} (2007), no.~1, 1--17.

\bibitem[S-D68]{SD} H.~P.~F.~Swinnerton-Dyer, \emph{$A^4+B^4=C^4+D^4$ revisited}, J. Lond. Math. Soc. \textbf{43} (1968), 149--151 

\bibitem[Tik81]{T}
 A.~S.~Tikhomirov, \emph{The geometry of the Fano surface of the double cover of $\mP^3$ branched in a quartic}, Math. USSR Izv. \text{16} (1981), no.~2, 373--397. Translated from Izv. Akad. Nauk SSSR Ser. Mat.~\textbf{44} (1980), no.~2, 415--442.

\bibitem[Yos94]{Yosh}  H.~Yoshihara, \emph{Degree of irrationality of an algebraic surface}, J. Algebra \textbf{167} (1994), no.~3, 634--640.
 
\bibitem[Wel81]{W}
G.E. Welters \emph{Abel--Jacobi isogenies for certain types of Fano threefolds}
Mathematical Centre Tracts, vol.~141, Mathematisch Centrum, Amsterdam, 1981.

\end{thebibliography}
\end{document}